\theoremstyle{plain}
\newtheorem{thm}{Theorem}[section]
\newtheorem{theorem}[thm]{Theorem}
\newtheorem{lemma}[thm]{Lemma}
\newtheorem{proposition}[thm]{Proposition}
\newtheorem{corollary}[thm]{Corollary}
\newtheorem{conjecture}[thm]{Conjecture}
\theoremstyle{definition}
\newtheorem{definition}[thm]{Definition}
\newtheorem{remark}[thm]{Remark}
\newtheorem{example}[thm]{Example}
\newtheorem{problem}[thm]{Problem}
\newtheorem{thevarthm}[thm]{\varthmname}
\newenvironment{varthm*}[1]{\trivlist\item[]{\bf #1.}\it}{\endtrivlist}
\newcommand\C{\mathbb C}
\newcommand\R{\mathbb R}
\newcommand\K{\mathbb K}
\renewcommand\P{\mathbb P}
\newcommand\call{{\mathcal L}}
\newcommand\calp{{\mathcal P}}
\newcommand\newop[2]{\def#1{\mathop{\rm #2}\nolimits}}
\newop\mult{m}
\newop\mmult{m}
\newcommand\eqnref[1]{(\ref{#1})}
\newcommand\wtilde[1]{\widetilde{#1}}
\newcommand\beginproof[1]{\trivlist\item[\hskip\labelsep{\em #1.}]}
\newcommand\proofof[1]{\beginproof{Proof of #1}}
\def\endproof{\hspace*{\fill}\endproofsymbol\endtrivlist}
\def\endproofsymbol{\frame{\rule[0pt]{0pt}{6pt}\rule[0pt]{6pt}{0pt}}}
\def\keywordname{{\bfseries Keywords}}%
\def\keywords#1{\par\addvspace\medskipamount{\rightskip=0pt plus1cm
\def\and{\ifhmode\unskip\nobreak\fi\ $\cdot$
}\noindent\keywordname\enspace\ignorespaces#1\par}}
\def\subclassname{{\bfseries Mathematics Subject Classification
(2000)}\enspace}
\def\subclass#1{\par\addvspace\medskipamount{\rightskip=0pt plus1cm
\def\and{\ifhmode\unskip\nobreak\fi\ $\cdot$
}\noindent\subclassname\ignorespaces#1\par}}
\begin{document}

\author{Th.~Bauer, S.~Di Rocco, B.~Harbourne, J.~Huizenga, \\ A.~Lundman, P.~Pokora, T.~Szemberg}
\title{Bounded Negativity and Arrangements of Lines}

\date{\today}

\maketitle

\thispagestyle{empty}

\begin{abstract}
   The Bounded Negativity Conjecture predicts that for any smooth complex
   surface $X$ there exists a lower bound for the selfintersection of reduced
   divisors on $X$. This conjecture is open. It is also not known if the
   existence of such a lower bound is invariant in the birational equivalence
   class of $X$. In the present note we introduce certain constants $H(X)$ which
   measure in effect the variance of the lower bounds in the birational equivalence class
   of $X$. We focus on rational surfaces and relate the value of $H(\P^2)$ to
   certain line arrangements. Our main result is Theorem \ref{thm:main}
   and the main open challenge is Problem \ref{pro:always -4}.
\end{abstract}

\maketitle

%*****************************************************************************

\section{Introduction}\label{sect:intro}
   In recent years there has been growing interest in constraints on negative
   curves on algebraic surfaces \cite{RDLS, NCAS, CS14, CilRou14, Har10, KM14, MT14}.
   The Bounded Negativity Conjecture (BNC for short)
   is probably the most intriguing open question in this area, see for example
   \cite[Conjecture 1.2.1]{Har10}, \cite[Conjecture 1.1]{NCAS}.
\begin{conjecture}[Bounded Negativity Conjecture]\label{conj:bnc}
   For every smooth projective surface $X$, there exists an integer $b(X)$
   such that $C^2\geq -b(X)$ for every reduced curve $C\subset X$.
\end{conjecture}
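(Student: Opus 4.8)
The plan is to reduce the statement for arbitrary reduced curves to the case of irreducible curves, and then to attack the irreducible case through adjunction combined with the Enriques–Kodaira classification. First I would argue that it suffices to produce a bound valid for \emph{irreducible} reduced curves. This reduction is not purely formal: for a reduced curve $C=\sum_{i=1}^{r}C_i$ with irreducible components $C_i$ one has $C^2=\sum_i C_i^2+\sum_{i\neq j}C_i\cdot C_j$, and although distinct irreducible curves meet nonnegatively, the number of components with $C_i^2<0$ is a priori unbounded, so a naive estimate only yields $C^2\geq -b\,r$. The right tool is the Hodge Index Theorem: the intersection form on $\mathrm{NS}(X)\otimes\R$ has signature $(1,\rho-1)$, so the components of $C$ cannot span too many independent negative directions (pairwise disjoint negative curves, for instance, number at most $\rho(X)$), and this constrains how much negativity can accumulate in a single reduced divisor. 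I regard this as a genuine lemma rather than a triviality, but a self-contained one.

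For an irreducible curve $C$ the engine is the adjunction formula $C^2=2p_a(C)-2-K_X\cdot C$. Since $p_a(C)\geq 0$, bounding $C^2$ from below is equivalent to bounding $2p_a(C)-K_X\cdot C$ from below, that is, to controlling how large $K_X\cdot C$ can be relative to the genus of $C$. The second step is therefore to extract such control from the positivity of $\pm K_X$. Two families fall out immediately. If $-K_X$ is nef then $K_X\cdot C\leq 0$, whence $C^2=(2p_a(C)-2)+(-K_X\cdot C)\geq -2$; this settles del Pezzo surfaces and surfaces carrying a nef anticanonical divisor. If $K_X\equiv 0$ (abelian, K3, Enriques, bielliptic) then $C^2=2p_a(C)-2\geq -2$ directly. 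In both cases the uniform bound $b(X)=2$ works for irreducible curves, and Step 1 then promotes it to all reduced curves.

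The third step is to reduce a general $X$ to a minimal model $\pi\colon X\to X_{\min}$, tracking how $K_X\cdot C$ and $p_a(C)$ transform under blow-down, and then to run a case analysis by Kodaira dimension on $X_{\min}$. For $\kappa=-\infty$ one distinguishes the ruled case over a base curve of positive genus, where a relative adjunction/index argument along the fibres of the ruling yields a uniform bound, from the rational case in which $X$ is an iterated blow-up of $\P^2$. For $\kappa(X_{\min})\geq 0$ the canonical class is nef, and one would try to combine the Bogomolov–Miyaoka–Yau inequality with the Hodge Index Theorem to limit the curves for which $K_X\cdot C$ is large compared with $p_a(C)$.

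The rational case, together with surfaces of general type, is where I expect the real obstacle—and it is precisely why the conjecture is open. Outside the two positive families above there is no known a priori mechanism bounding $2p_a(C)-K_X\cdot C$ uniformly: on a blow-up of $\P^2$ at points in very special position one can produce irreducible curves whose singularities keep $p_a(C)$ small while $K_X\cdot C$ grows, and the difficulty is not about any single surface but about the absence of an estimate uniform over all such blow-ups. This is exactly the phenomenon the Harbourne constants $H(X)$ introduced in this note are designed to measure, and a uniform lower bound for them—the content of Problem \ref{pro:always -4}—would be tantamount to the rational case of the conjecture. In short, Steps 1--3 dispose of everything except rational surfaces and general type, while the remaining step is the genuine open core, which I would not expect to close by the elementary adjunction methods above.
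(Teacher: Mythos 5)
You have attempted to prove Conjecture~\ref{conj:bnc} itself, but there is no proof to compare against: the paper states bounded negativity as an open conjecture, proves only the much weaker linear analogue (Theorem~\ref{thm:main}, via Hirzebruch's inequality \eqref{eq:Hirzebruch}), and even records that Problem~\ref{pro:bnc birat inv} is unsolved for the blow-up of a surface at a \emph{single} point. Your own closing admission is accurate, so let me just pin down where the attempt stands. The correct parts are genuinely known: the reduction from reduced to irreducible curves holds and is cited in the paper to \cite{NCAS} (though your Hodge-index sketch is not by itself a proof of it --- the components of a reduced curve need not be pairwise disjoint, and negativity can accumulate through many components that pairwise meet, so the actual reduction in \cite{NCAS} requires more care than counting negative-definite directions); and the adjunction arguments giving $b(X)=2$ when $-K_X$ is nef or $K_X\equiv 0$ are correct.

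The gap is Step 3, and it is not a technical gap but the entire content of the conjecture. There is no known mechanism --- Bogomolov--Miyaoka--Yau plus Hodge index included --- that bounds $K_X\cdot C - 2p_a(C)$ uniformly on blow-ups of $\P^2$ at arbitrary point sets or on surfaces of general type. Two facts show why your proposed tools cannot close it as stated. First, the conjecture is \emph{false} in positive characteristic (e.g.\ graphs of iterated Frobenius on $C\times C$ for $g(C)\geq 2$, a minimal surface of general type with nef canonical class), so any argument must use characteristic-zero input essentially; adjunction and the Hodge Index Theorem are characteristic-free and hence provably insufficient on their own. Second, the one characteristic-zero tool that does yield uniform bounds --- the logarithmic Miyaoka--Yau--Sakai inequality underlying \eqref{eq:Hirzebruch} --- currently applies only to very special curves such as line arrangements, which is exactly why the paper can prove $H_L\geq -4$ but must leave $H(\P^2)\geq -4$ as Problem~\ref{pro:always -4}. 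So your proposal is a reasonable survey of the known partial cases, but it does not prove the statement, and no proof of it exists in the paper or in the literature.
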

   This Conjecture is well known to be false in finite characteristic. In the present
   paper we work therefore in the setting of complex algebraic varieties,
   where it remains open. Subsection \ref{subsec:real projective plane} is
   devoted to the special case of the real projective plane.

  Conjecture \ref{conj:bnc} is related to a number of interesting questions.
   The present note
   is motivated by the following problem.
\begin{problem}[Birational invariance of the BNC]\label{pro:bnc birat inv}
   Let $X$ and $Y$ be birationally equivalent projective surfaces. Does
   BNC hold for $X$ if and only if it holds for $Y$?
   In other words: is the bounded negativity property a birational invariant?
\end{problem}
\begin{remark}
   Note, that a solution to the above problem is not known even
   if $Y$ is the blow-up of $X$ in a single point.
\end{remark}
   Of course, if BNC is true in general, then the above problem
   has an affirmative solution. However, even in that situation,
   it is still of interest to know how the bounds $b(X)$ and $b(Y)$
   are related in terms of the complexity of a birational map between
   $X$ and $Y$.

   In the present note we study Problem \ref{pro:bnc birat inv}
   for blow-ups $Y_s$ of $\P^2$ in \emph{arbitrary} sets of $s$ points.
   A recent preprint by Ciliberto and Roulleau \cite{CilRou14}
   addresses BNC on blow-ups of $\P^2$ in \emph{general} points.
   These two set-ups are quite different. It is predicted by the
   Segre-Harbourne-Gimigliano-Hirschowitz (SHGH) Conjecture that $(-1)$-curves are the
   only negative curves on blow-ups of $\P^2$ in general
   points, independent of their number $s$. On the other hand, if one is allowed to pick any $s$ points,
   it is elementary to see that one can obtain reduced curves $C$ for which
   $C^2$ is arbitrarily negative by allowing $s$ to grow.
   This raises the question of the boundedness of $C^2/s$.
   Our main result, Theorem \ref{thm:main}, shows that $C^2/s>-4$
   for reduced curves $C$ on $Y_s$ which are strict transforms of
   configurations of lines in $\P^2$. In fact, we know of no examples of
   irreducible curves $C\subset Y_s$ for which $C^2/s\leq -2$ and no examples of any
   kind for which $C^2\leq -4$. This has led to our expectation
   that the bound $C^2/s>-4$ which applies for curves coming
   from configurations of lines may apply in all cases,
   but we have not been able to prove this so far; see Problem \ref{pro:always -4}.
   We also include a short discussion of the only two
   line arrangements we know of for which $C^2/s\leq -3$.

\section{Local negativity}\label{sect:local negativity}
   Let $X$ be a smooth projective surface and let $C$ be
   a reduced curve on $X$.
   For any point $P\in X$, let $\mult_{P_i}(C)$ denote the multiplicity of $C$ at $P$; i.e.,
   if $h\in {\mathcal O}_{X,P}$ is a local equation for $C$ at $P$,
   then $\mult_{P_i}(C)$ is the largest $t$ such that $h\in {\frak m}^t$, where $\frak m$
   is the maximal ideal of ${\mathcal O}_{X,P}$. Alternatively, if $f:Y\to X$ is the blow up of $X$ at $P$
   and $E_P$ its exceptional divisor, then $t$ is the multiplicity of the component of $E_P$
   occurring in the total transform of $C$ on $Y$ and hence the proper transform $\wtilde{C}$ of $C$ on $Y$
   (up to linear equivalence) is $f^*C-tE_P$.

   We note that it is easy to find surfaces birational to a given surface and
   carrying arbitrarily negative curves.
   For example, let $P_1,\ldots,P_s$ be mutually distinct smooth
   points on $C$ and take $f:Y\to X$ to be the blow-up of $X$ at the
   points $P_1,\ldots,P_s$ with exceptional divisors $E_1,\ldots,E_s$.
   Then the proper transform of $C$ is
\begin{equation}\label{smoothpoints}
   \wtilde{C}=f^*C-\sum_{i=1}^s E_i
\end{equation}
   and has $\wtilde{C}^2=C^2-s$. As Remark \ref{singptssuffice} makes clear, we can ignore trivial
   situations of this kind by introducing the following $H$--constants,
   which measure the \emph{local negativity} of curves on surfaces (in analogy to
   the local positivity measured by Seshadri constants).
\begin{definition}[Hadean constants, or $H$--constants]\label{def:H-constants}
   Let $X$ be a smooth projective surface and let
   $\calp=\left\{P_1,\ldots,P_s\right\}$ be a set of $s\geq 1$ mutually
   distinct points of $X$. Then the \emph{$H$--constant of $X$ at $\calp$}
   is defined as
\begin{equation}\label{eq:H-const for calp}
      H(X;\calp):=\inf_C\frac{\wtilde{C}^2}{s},
\end{equation}
   where $\wtilde{C}$ is the proper transform of $C$ with respect to
   the blow-up $f:Y\to X$ of $X$ at the set $\calp$ and
   the infimum is taken over all \emph{reduced} curves $C\subset X$.
   Note that $\wtilde{C}^2=(f^*C-\sum_{i=1}^s (\mult_{P_i}(C))E_i)^2=C^2-\sum_{i=1}^s (\mult_{P_i}(C))^2$,
   where $E_1,\ldots,E_s$ are the exceptional divisors of the blown up points.
   Thus $\frac{\wtilde{C}^2}{s}$ can be thought of as the average of the numbers $-((\mult_{P_i}(C))^2-\frac{C^2}{s})$.\\
   Similarly, we define the \emph{$s$--tuple $H$--constant of $X$}
   as the infimum
   $$H(X;s):=\inf\limits_{\calp}H(X;\calp),$$
   where the infimum now is taken over all $s$--tuples of mutually
   distinct points in $X$.\\
   Finally, we define the \emph{global $H$--constant of $X$} as
   $$H(X):=\inf\limits_{s\geq 1}H(X;s).$$
\end{definition}

\begin{example}\label{-2example}
  We do not know if it ever happens that $H(X)=-\infty$, but we always have $H(X)\leq -2$.
  For example, embed $X$ in projective space as a surface of some degree $d$.
  Let $C$ be the intersection of $X$ with a general union of $r$ hyperplanes.
  Then $C$ has $r$ smooth irreducible components $C_i$, each pair of which meet in $d^2$
  mutually distinct points.
  Thus $C$ has $s=d^2\binom{r}{2}$ nodes, with each component $C_i$ passing
  through $d^2(r-1)$ of these $d^2\binom{r}{2}$ points.
  Taking $\calp$ to be these nodes, note that
   $\wtilde{C}$ has $r$ disjoint components $\wtilde{C}_i$, each of which has
   $\wtilde{C}_i^2=d^2-(r-1)d^2=(2-r)d^2$.
   This gives
   $$H(X;\calp)\leq \frac{\wtilde{C}^2}{s} = \frac{r(2-r)d^2}{d^2\binom{r}{2}}=-2+\frac{2}{r-1},$$
   and hence $H(X)\leq -2$.
\end{example}

\begin{remark}
   The relation between $H$--constants and the BNC can be explained in the following way.
   It is known that BNC holds if and only if
   it holds for irreducible curves \cite{NCAS}. Suppose that $H(X)$ is not $-\infty$.
   Then for any $s\geq 1$ and any irreducible curve $D$ on the blow-up of $X$
   at $s$ points, we have
   $$D^2\geq sH(X).$$
   Hence BNC would hold on all blow-ups of $X$ at $s$ mutually distinct points.
   On the other hand, BNC might still be true, even if $H(X)=-\infty$.
   Nonetheless, this suggests that computing $H(X)$ can be expected to be challenging.
\end{remark}

\begin{remark}\label{singptssuffice}
   Example \ref{-2example} also suggests a point of view
   focusing on the curves rather than on the points:
   given any reduced curve $C\subset X$ and a set $\calp\subset X$ of $s$ points, define
   $$H(C;\calp)=\frac{\wtilde{C}^2}{s}$$
   where $\wtilde{C}$ is as in Example \ref{-2example}, and, when $C$ is singular, define the Hadean constant of $C$ to be
   $$H(C)=\inf H(C;\calp')$$
   where the infimum is taken over all subsets $\calp'$ of the set of singular points of $C$.
   As we saw above, there are curves $C$ and point sets $\calp'$
   with $H(C;\calp')\leq-1$. For any $C$ and $\calp'$ with $H(C;\calp')\leq-1$, regarding $H(C;\calp')$ as an average,
   it is clear that $\calp'$ must contain some singular points of $C$. Let $\calp$ be the subset of $\calp'$ of
   points singular on $C$.
   Then taking $a$ to be the average of the numbers $(\mult_P(C))^2-\frac{C^2}{s}$ for $P\in \calp$ where $s=|\calp|$
   is the number of points in $\calp$, we have
   $$H(C)\leq H(C;\calp)=\frac{C^2-\sum_{P\in\calp}(\mult_P(C))^2}{s}=-a\leq-\frac{sa+t}{s+t}=H(C;\calp'),$$
   where $|\calp'|=s+t$; i.e., if $H(C;\calp')\leq-1$, one might as well use only
   the points of $\calp'$ singular on $C$. It follows that
   $$H(X)=\inf_{\text{all reduced singular $C$}}H(C).$$
\end{remark}

   By the preceding remark, $H(X)$ is the infimum of the
   Hadean constants of reduced singular curves on $X$. Given that computing $H(X)$
   is expected to be difficult, it becomes of interest to study bounds $H(X)\leq H(C)$
   given by specific reduced singular curves $C\subset X$.
   An important special case is that of $X=\P^2$;
   as Example \ref{-2example} shows it is not hard to find plane curves $C$ with $H(C)<-1$,
   and it is easy to check that the image $C$ of a general map of degree $d$ of $\P^1$ into $\P^2$
   gives an irreducible curve such that $H(C)$ approaches $-2$ from above as $d\to\infty$.
   It is much harder to find examples with $H(C)\leq -2$. In fact, as mentioned above, we do not know
   any plane curves $C$ with $H(C)\leq-4$ nor do we know
   any examples of irreducible plane curves with $H(C)\leq-2$. (Examining
   all possibilities allowed by the genus formula shows that any such
   irreducible singular plane curve would have to have degree at least 21.) Thus so far plane curves $C$ with Hadean constants
   $H(C)\leq -2$ appear to be a largely invisible underworld.
   In the next section we take the first steps toward getting to the bottom of it (if indeed it is not bottomless!) by looking at
   plane curves which are unions of lines. Restricting to configurations of lines
   may seem to be a strong restriction, but perhaps it is not; see Problem \ref{pro:always -4}.

\section{Linear local negativity}
   In this section we are interested in configurations of lines
   in the projective plane. By such a configuration we understand
   a family $\call=\left\{L_1,\ldots,L_d\right\}$ of mutually distinct
   lines $L_i$. Let $\calp(\call)=\left\{Q_1,\ldots,Q_s\right\}$ be the set
   of points in $\P^2$, where at least two of the lines in $\call$ meet;
   we call these points \emph{singular} points of the configuration
   (they are precisely the singular points of the union of the $s$ lines).
   For a point $Q\in\P^2$, denote by $\mmult_{Q}(\call)$ the number of lines in $\call$
   passing through that point (of course $\mmult_Q(\call)=\mult_Q(C)$, where $C$ is the
   union of lines in the configuration $\call$).

   Now we are in a position to introduce the following variant of Definition \ref{def:H-constants}.
\begin{definition}\label{def:H-const linear}
   Let $\calp=\left\{P_1,\ldots,P_s\right\}$ be a set of mutually
   distinct $s\geq 1$ points in the projective $\P^2$.
   Then the \emph{linear $H$--constant at $\calp$}
   is defined as
   $$H_L(\calp):=\inf\limits_{\call} H_L(\calp,\call),$$
   where for a configuration $\call$ we have
   \begin{equation}\label{eq:linear H-const for calp}
      H_L(\calp,\call)=\frac{d^2-\sum_{i=1}^{s}\mmult_{P_i}(\call)^2}{s}.
   \end{equation}
   Similarly as before, we define the \emph{$s$--tuple linear $H$--constant}
   as the infimum
   $$H_L(s):=\inf\limits_{\calp}H_L(\calp),$$
   where now the infimum is taken over all $s$--tuples of mutually
   distinct points in $\P^2$.\\
   Finally, we define the \emph{global linear $H$--constant of $\P^2$} as
   $$H_L:=\inf\limits_{s\geq 1}H_L(s).$$
\end{definition}
\begin{remark}\label{LinearCaseFullSingSet}
   Since there are arrangements $\call$ and points $\calp'$ with
   $H_L(\calp',\call)\leq-1$ (take any $r\geq3$ general lines for $\call$ and
   their singular points for $\calp'$), it follows in any such case
   as in Remark \ref{singptssuffice} that $H(C)\leq H_L(\calp,\call)\leq H_L(\calp',\call)$, where $C$ is the union of the lines in $\call$
   and $\calp$ is the subset of points in $\calp'$ singular on $C$. Thus
   $H_L=\inf H(C)$, where the infimum
   is taken over all curves $C$ which are unions of lines.
   (In fact, if $C$ is the union of lines in an arrangement $\call$ with $H(\calp(\call),\call)\leq -1$,
   it follows from Theorem \ref{thm:main} that $-4\leq\inf H(C)$ and so $H(C)=H(C;\calp(\call))=H_L(\calp(\call),\call)$,
   since a weighted average of a value $H(C)$ greater than or equal to $-4$ with
   numbers less than or equal to $-4$ (i.e., minus the squares of
   multiplicities at singular points) never is more than $H(C)$.)
\end{remark}

   Our main result is the following bound on the constant $H_L$.
   The main point is that the bound holds for an arbitrary number of points $s$.
   Moreover the bound is very explicit and close to optimal (see Remark \ref{rmk:opti}).
   This is considerable progress when compared to \cite[Section 3.8]{RDLS},
   where this subject was first taken on.
\begin{theorem}[Bounded linear negativity on $\P^2$]\label{thm:main}
   With the above notation, we have
   $$H_L \geq -4.$$
\end{theorem}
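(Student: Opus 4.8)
The plan is to convert the bound into a purely combinatorial inequality on the singular points of a line arrangement and then feed that inequality into Hirzebruch's inequality for complex line arrangements. Since we are bounding an infimum from below, it suffices to show $H_L(\calp,\call)\ge -4$ for every fixed arrangement $\call$ of $d$ lines and every $\calp$. First I would reduce to the case where $\calp$ is the full set of singular points. If $P_i\in\calp$ is not a singular point of $\call$ then $\mmult_{P_i}(\call)\le 1$; starting from the full singular set and passing to an arbitrary $\calp$, each removal of a singular point of multiplicity $m\ge 2$ changes the numerator of \eqnref{eq:linear H-const for calp} by $+m^2\ge 4$ and the denominator by $-1$, while each addition of a point of multiplicity $\le 1$ changes the numerator by $\ge -1\ge -4$ and the denominator by $+1$. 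In both moves the inequality ``numerator $\ge -4\cdot(\text{denominator})$'' is preserved, so, in line with Remark \ref{singptssuffice} and Remark \ref{LinearCaseFullSingSet}, it is enough to treat $\calp=\calp(\call)=\{Q_1,\dots,Q_s\}$, the set of all singular points.

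Next I would record the combinatorial data. Write $t_k$ for the number of points lying on exactly $k$ of the lines ($k\ge2$), so that $s=\sum_{k\ge2}t_k$ and $\sum_i\mmult_{Q_i}(\call)^2=\sum_{k\ge2}k^2t_k$. Since two distinct lines meet in exactly one point, counting incident pairs of lines gives
\[
\binom{d}{2}=\sum_{k\ge2}\binom{k}{2}t_k,\qquad\text{equivalently}\qquad d(d-1)=\sum_{k\ge2}k(k-1)\,t_k.
\]
Substituting $\sum_{k\ge2}k^2t_k=d(d-1)+\sum_{k\ge2}k\,t_k$ into the target inequality $d^2-\sum_{k\ge2}k^2t_k\ge -4s$, the quadratic terms cancel and it reduces to the linear inequality
\[
\sum_{k\ge2}(k-4)\,t_k\le d.
\]

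The key input is Hirzebruch's inequality: for a complex arrangement of $d$ lines that is neither a pencil nor a near-pencil (that is, $t_d=t_{d-1}=0$) one has $t_2+\tfrac34 t_3\ge d+\sum_{k\ge5}(k-4)t_k$. Splitting off the low-order terms via $\sum_{k\ge2}(k-4)t_k=-2t_2-t_3+\sum_{k\ge5}(k-4)t_k$ and inserting this bound gives $\sum_{k\ge2}(k-4)t_k\le -t_2-\tfrac14 t_3-d\le -d$, which is comfortably $\le d$; in fact it yields the strictly stronger estimate $H_L(\calp(\call),\call)\ge -4+\frac{2d+t_2+\frac14 t_3}{s}$, indicating that $-4$ is not attained by full singular sets. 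It then remains to dispose of the excluded cases: for a pencil ($t_d=1$) the unique point gives value $0$; for a near-pencil ($t_{d-1}=1$, $t_2=d-1$) a direct computation gives $-2+\tfrac3d$; and for $d\le5$ the inequality $\sum_{k\ge2}(k-4)t_k\le d$ is automatic, since then $k\le4$ at every singular point unless the arrangement is a pencil. All of these are well above $-4$.

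The main obstacle is precisely the invocation of Hirzebruch's inequality. This is the single place where the complex (as opposed to, say, real) structure is essential, and it rests on the Bogomolov--Miyaoka--Yau inequality applied to the abelian (Kummer) cover of $\P^2$ branched along the arrangement; without it the linear inequality $\sum_{k\ge2}(k-4)t_k\le d$ genuinely fails over other ground fields. Everything else is bookkeeping, and the only genuinely separate cases are the pencils and near-pencils handled directly above.
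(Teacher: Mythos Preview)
Your proposal is correct and follows essentially the same approach as the paper: reduce to the full singular set $\calp=\calp(\call)$, dispose of pencils and near-pencils directly, and in the main case combine the combinatorial identity $\binom{d}{2}=\sum_k\binom{k}{2}t_k$ with Hirzebruch's inequality to obtain the refined bound $H_L(\calp(\call),\call)\ge -4+\frac{2d+t_2+\frac14 t_3}{s}$. The only cosmetic difference is that you first rewrite the goal as the linear inequality $\sum_{k\ge2}(k-4)t_k\le d$ before applying Hirzebruch, whereas the paper bounds $\sum_k k^2 t_k$ directly; the algebra is identical.
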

   The main ingredient in the proof of this Theorem is the following
   inequality due to Hirzebruch combined with some ad hoc arguments. For $k\geq 2$, let $t_k(\call)$
   denote the number of points where exactly $k$ lines from
   $\call$ meet.
\begin{theorem}[Hirzebruch inequality]
   Let $\call$ be an arrangement of $d$ lines in the complex projective plane $\P^2$.
   Then
   \begin{equation}\label{eq:Hirzebruch}
      t_2+\frac34t_3\geq d+\sum\limits_{k\geq 5}(k-4)t_k,
   \end{equation}
   provided $t_d=t_{d-1}=0$.
\end{theorem}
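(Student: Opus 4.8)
The plan is to prove the Hirzebruch inequality itself, which is the stated final result. This is a deep theorem whose only known proof goes through the theory of complex ball quotients and the Bogomolov--Miyaoka--Yau (BMY) inequality, so I would not attempt an elementary combinatorial argument; instead I would reconstruct Hirzebruch's original approach via Kummer (abelian) covers of $\P^2$ branched along the line arrangement.

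First I would associate to the arrangement $\call=\{L_1,\dots,L_d\}$ a finite abelian Galois cover $Y\to\P^2$ branched along the $d$ lines, of the type Hirzebruch constructed: fix a prime (or integer) $n\geq 2$ and take the cover defined by the equations $w_i^n=\ell_i/\ell_d$ where $\ell_i$ is a linear form cutting out $L_i$; this yields a normal surface with abelian Galois group $(\Z/n\Z)^{d-1}$. Next I would resolve the singularities of $Y$, which sit over the singular points $Q_j$ of $\call$ of multiplicity $\mmult_{Q_j}(\call)=k\geq 3$ (the nodes where $k=2$ give smooth points of the cover), obtaining a smooth surface $\wtilde{Y}$. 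The heart of the computation is to express the Chern numbers $c_1^2(\wtilde{Y})$ and $c_2(\wtilde{Y})$ in terms of the combinatorial data $d$, $n$, and the $t_k=t_k(\call)$. This is done by standard formulas for cyclic/abelian covers: $c_2$ is computed from the topological Euler characteristic via the additivity of $\chi$ over the stratification of $\P^2$ by the arrangement, and $c_1^2=(K_{\wtilde{Y}})^2$ is computed from $K_{\wtilde{Y}}=\pi^*(K_{\P^2}+\frac{n-1}{n}\sum_i L_i)$ corrected by the exceptional divisors of the resolution.

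The key analytic input is then the BMY inequality $c_1^2(\wtilde{Y})\leq 3\,c_2(\wtilde{Y})$, valid because the relevant covers are (after resolution) surfaces of general type with the right positivity properties. I would substitute the two Chern-number expressions into this inequality; after dividing through by the common factor $n^{d-2}$ (the degree-related normalization) and letting $n\to\infty$, the leading-order terms in $n$ dominate, and the inequality $c_1^2\leq 3c_2$ reduces precisely to a linear relation among $d$ and the $t_k$. Collecting terms and using the combinatorial identities $\binom{d}{2}=\sum_{k\geq 2}\binom{k}{2}t_k$ (each pair of lines meets in exactly one singular point) to eliminate redundant quantities, the surviving inequality rearranges into
\begin{equation*}
  t_2+\tfrac34 t_3\geq d+\sum_{k\geq 5}(k-4)t_k,
\end{equation*}
which is exactly \eqnref{eq:Hirzebruch}. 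The hypothesis $t_d=t_{d-1}=0$ is what guarantees that the cover $\wtilde{Y}$ genuinely lands in the range where BMY applies (no line passes through all but at most one of the intersection points, so the surface is not of a degenerate type such as a near-pencil), and I would isolate exactly where this hypothesis enters so the logic is transparent.

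The main obstacle, and the reason this is not a short argument, is the algebraic geometry of the covering surface: correctly computing the contributions of the resolved singularities to both Chern numbers, and verifying that $\wtilde{Y}$ satisfies the hypotheses needed for BMY (that it is of general type, or at least that the orbifold/logarithmic version of BMY applies to the pair), is delicate and is where all the real work lies. An alternative that sidesteps the explicit resolution is to invoke the \emph{logarithmic} Miyaoka--Yau inequality directly for the pair $(\P^2,\sum_i L_i)$ once one checks the boundary is log-canonical and the log cotangent bundle is appropriately semistable; this phrases the whole argument as a single orbifold-BMY estimate and may be cleaner, but it still requires the same semistability verification as its central difficulty. Either route reduces the proof to one hard geometric inequality plus bookkeeping with the $t_k$.
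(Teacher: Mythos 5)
The first thing to note is that the paper does not actually prove this theorem: its ``proof'' is the single line ``See \cite[Section 3 and page 140]{Hir83},'' followed by a remark that the argument rests on the logarithmic Miyaoka--Yau--Sakai inequality, with a detailed account in the Appendix of \cite{RDLS}. So your proposal is correctly aimed at reconstructing Hirzebruch's cited original argument, and its architecture is indeed his: the Kummer cover $w_i^n=\ell_i/\ell_d$ with Galois group $(\Z/n\Z)^{d-1}$ branched along the arrangement, resolution of the singularities lying over the points of multiplicity at least $3$, computation of $c_1^2$ and $c_2$ of the resolved cover in terms of $d$, $n$ and the $t_k$, and the BMY inequality $c_1^2\leq 3c_2$. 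Your reading of the hypothesis $t_d=t_{d-1}=0$ (it excludes pencils and quasi-pencils, which is needed to ensure the resolved cover is a minimal surface of nonnegative Kodaira dimension where BMY applies) is also essentially correct.

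There is, however, a genuine error at the decisive step: the claim that one lets $n\to\infty$ and reads off \eqnref{eq:Hirzebruch} from the leading-order terms. After normalizing by $n^{d-3}$, both Chern numbers are quadratic polynomials in $n$, with leading coefficients $e\bigl(\P^2\setminus\bigcup_i L_i\bigr)=3-2d+f_1-f_0$ for $c_2$ and $(d-3)^2-\sum_{k\geq3}(k-2)^2t_k=9-5d+3f_1-4f_0$ for $c_1^2$, where $f_0=\sum_{k\geq2}t_k$ and $f_1=\sum_{k\geq2}kt_k$. Hence the $n^2$-coefficient of $3c_2-c_1^2$ is exactly $f_0-d$, and your limit yields only the true but far weaker statement $f_0\geq d$ (a de Bruijn--Erd\H{o}s-type bound on the number of singular points): all information about $t_2$ and $t_3$ sits in the linear and constant terms in $n$ and is annihilated by passing to leading order. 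Hirzebruch obtains \eqnref{eq:Hirzebruch} by evaluating the full quadratic at the \emph{specific} exponent $n=3$ --- that is precisely where the coefficient $\frac34$ on $t_3$ comes from --- and no limiting argument can recover it. The same caveat applies to your proposed log-MY alternative: applying it to the pair $(\P^2,\sum_i L_i)$ with reduced boundary is equivalent to the $n\to\infty$ leading term, and one must instead work with the fractional boundary coefficient $\frac{n-1}{n}=\frac23$ (as in the Sakai-style treatment in \cite{RDLS}). Beyond this, your sketch defers the entire Chern-number bookkeeping and the general-type verification, which is where the real work lies; but the $n\to\infty$ step is the one that, as written, would fail outright.
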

\begin{proof}
   See \cite[Section 3 and page 140]{Hir83}.
\end{proof}
\begin{remark}
   Various refinements of the above inequality are known;
   see for example formula $*(10)$ on page 141 in \cite{BHH87}.
   However they don't contribute towards improving the lower
   bound in Theorem \ref{thm:main}.
\end{remark}
\begin{remark}
   The proof of the Hirzebruch inequality is based on the logarithmic
   Miyaoka--Yau--Sakai inequality, which assumes
   the complex numbers. See \cite[Appendix]{RDLS} for a detailed proof
   and some relevant comments.
\end{remark}

\proofof{Theorem \ref{thm:main}}
To prove the theorem, we will show that for any configuration of lines $\call$ and points $\calp$ we have $H_L(\calp,\call)>-4.$
By Remark \ref{LinearCaseFullSingSet}, if for some $\call$ and $\calp'$ we have $H_L(\calp',\call)\leq -4$,
then $H_L(\calp,\call)\leq H_L(\calp',\call)$, where $\calp$ is the subset of $\calp'$ of points singular for $\call$.
This reduces us to considering the case that
$\calp\subseteq\calp(\call)$. But in that case we would also have $H_L(\calp(\call),\call)\leq -4$, since $H_L(\calp(\call),\call)$ is a weighted average
of $H_L(\calp,\call)$ with numbers each of which is at most $-4$
(because each number is minus the square of a multiplicity at a singular point). Thus it is enough to show that $H_L(\calp(\call),\call)>-4$ for all $\call$.

So suppose $\calp = \calp(\call)$ is the full set of singularities of $\call$.  Say $\call$ has $d$ lines and $s$ singularities, and let $t_k$ for $k\geq 2$ denote the number of singularities of multiplicity $k$.  To apply the Hirzebruch inequality, we must first deal with the cases where either $t_d$ or $t_{d-1}$ is nonzero.  We assume $d\geq 4$ to avoid trivialities.

\textbf{Case $t_d=1$.} In this case all lines in $\call$ belong to a single pencil.  There is a single singularity of multiplicity $d$, $\calp$ is just the singular point, and $H_L(\calp,\call)=0$.

\textbf{Case $t_{d-1}=1$.} This case is called a \emph{quasi--pencil} by Hirzebruch. We have $t_2=d-1$ and all other numbers
   $t_i$ vanish. In this situation we find
   $$H_L(\calp,\call)= -2 +\frac{3}{d}.$$

   \textbf{Case $t_d=t_{d-1}=0$.}
We now use the Hirzebruch inequality and the following two obvious equalities
   \begin{equation}\label{eq:comb equalities}
   \mbox{a) } s=t_2+\ldots+t_d,\;\;\;\;\;\;\;\mbox{b) } \binom{d}{2}=\sum\limits_{k\geq 2}^d\binom{k}{2}t_k.
   \end{equation}
to estimate the $H_L$-constant
$$H_L(\calp,\call) = \frac{d^2- \sum_{k\geq 2} k^2t_k}{s}.$$ By the combinatorial equalities a), b), and the Hirzebruch inequality, we have
\begin{align*}
\sum_{k\geq 2} k^2 t_k
& = 2\sum_{k\geq 2} {k\choose 2} t_k + \sum_{k\geq 2} (k-4) t_k + 4\sum_{k\geq 2} t_k \\
&= d^2 - d - 2t_2 - t_3 + \sum_{k\geq 5} (k-4)t_k + 4s \\
&\leq d^2- 2d -t_2 - \frac{1}{4}t_3 + 4s.
\end{align*}
We conclude
$$H_L(\calp,\call) = \frac{d^2- \sum_{k\geq 2} k^2t_k}{s} \geq -4 + \frac{2d+t_2+\frac{1}{4}t_3}{s} > -4.$$
\endproof

The next corollary follows immediately from the proof of the theorem and Remark \ref{LinearCaseFullSingSet}.
We use the notation of the theorem and its proof.

\begin{corollary}\label{cor:tight bound}
If $\call$ is any configuration of lines with $H_L(\calp(\call),\call)\leq -1$ and $C$ the union of the lines, then
$$\inf_{\calp} H_L(\calp,\call)=H_L(\calp(\call),\call)=H(C).$$
Furthermore, if $t_d = t_{d-1} = 0$ then $$H_L(\calp(\call),\call) \geq -4 + \frac{2d+t_2+\frac{1}{4}t_3}{s}.$$
\end{corollary}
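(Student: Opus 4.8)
The plan is to read off all three assertions from the proof of Theorem \ref{thm:main} together with Remarks \ref{singptssuffice} and \ref{LinearCaseFullSingSet}. The single fact I would extract from the body of that proof and reuse throughout is that $H_L(\calp,\call)>-4$ for \emph{every} finite point set $\calp$ and every arrangement $\call$; this is precisely what the three cases $t_d=1$, $t_{d-1}=1$, $t_d=t_{d-1}=0$ establish. I would also record the elementary monotonicity principle behind each step: writing $H_L(\calp,\call)=\frac{d^2-\sum_{P\in\calp}\mmult_P(\call)^2}{|\calp|}$, adjoining a point $P$ to $\calp$ replaces $H_L$ by a strictly smaller value exactly when the new contribution $-\mmult_P(\call)^2$ is smaller than the current value $H_L(\calp,\call)$, and by a larger value when it exceeds it.

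To prove the chain of equalities I would show that $\calp(\call)$ minimizes $H_L(\calp,\call)$ over all $\calp$, in two steps. First, reduce an arbitrary $\calp$ to its singular part $\calp_s=\calp\cap\calp(\call)$: if $H_L(\calp,\call)\geq -1$ there is nothing to prove, since by hypothesis $H_L(\calp(\call),\call)\leq -1$; and if $H_L(\calp,\call)\leq -1$, then every non-singular point of $\calp$ contributes $-\mmult^2\geq -1\geq H_L(\calp,\call)$, so by the monotonicity principle (this is exactly the average computation in Remark \ref{singptssuffice}) discarding those points does not increase $H_L$, giving $H_L(\calp_s,\call)\leq H_L(\calp,\call)$. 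Second, among subsets of $\calp(\call)$ the full set is smallest: passing from $\calp_s$ to $\calp(\call)$ adjoins singular points, each contributing $-\mmult_P(\call)^2\leq -4$, and since every intermediate value of $H_L$ exceeds $-4$, each such adjunction strictly lowers it, whence $H_L(\calp(\call),\call)\leq H_L(\calp_s,\call)$. Combining the two steps yields $H_L(\calp(\call),\call)\leq H_L(\calp,\call)$ for all $\calp$, so $\inf_{\calp} H_L(\calp,\call)=H_L(\calp(\call),\call)$.

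The remaining equality $H_L(\calp(\call),\call)=H(C)$ I would take directly from the parenthetical of Remark \ref{LinearCaseFullSingSet}: for the union $C$ of the lines one has $H(C;\calp)=H_L(\calp,\call)$ whenever $\calp\subseteq\calp(\call)=\mathrm{sing}(C)$, and under the standing hypothesis $H_L(\calp(\call),\call)\leq -1$ that remark (via Theorem \ref{thm:main}) already identifies $H(C)=H(C;\calp(\call))=H_L(\calp(\call),\call)$. Finally, the displayed lower bound in the case $t_d=t_{d-1}=0$ is verbatim the estimate $H_L(\calp(\call),\call)=\frac{d^2-\sum_{k\geq2}k^2t_k}{s}\geq -4+\frac{2d+t_2+\frac14 t_3}{s}$ obtained at the close of the proof of Theorem \ref{thm:main}, specialized to $\calp=\calp(\call)$, so nothing new is needed there.

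The only delicate point, and the step I would treat as the main obstacle, is the bookkeeping in the first claim, where the two monotonicity inequalities are applied in opposite directions: non-singular points, contributing at least $-1$, sit above the average and may be discarded, while additional singular points, contributing at most $-4$, sit below it and must be adjoined. It is exactly the hypothesis $H_L(\calp(\call),\call)\leq -1$ together with the universal bound $H_L>-4$ that keeps every intermediate value in the correct range and makes both operations simultaneously valid.
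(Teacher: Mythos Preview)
Your argument is correct and is essentially the paper's own proof, which simply cites the proof of Theorem~\ref{thm:main} together with Remark~\ref{LinearCaseFullSingSet}; you have merely unpacked the weighted-average reductions implicit there into an explicit one-point-at-a-time monotonicity principle. The case split in your first step (disposing of $\calp$ with $H_L(\calp,\call)\geq -1$ directly via the hypothesis), the two opposing monotonicity moves governed by the thresholds $-1$ and $-4$, and the invocation of the final displayed estimate from the theorem's proof all match the paper's intended reasoning.
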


\begin{remark}\label{rmk:opti}
   The theorem shows that $H_L$ is a well-defined real number.  A natural question is whether there is a certain line configuration
   with ratio $H_L$ or if instead $H_L$ is only a limit of ratios from a sequence of configurations.
   Another natural question is whether $H_L=-4$.
   The least constant $H_L(\calp,\call)$
   known to us so far is $-\frac{225}{67}\approx -3.36$; see section \ref{subsection: Wiman}.
   There is also an example with $H_L(\calp,\call)=-3$ (see section \ref{subsection: Klein}).
   It is interesting to note that the infimum of values $H_L(\calp,\call)$ for arrangements $\call$ defined over the reals
   is $-3$, but $H_L(\calp,\call)>-3$ for any specific such arrangement; see \S\ref{subsec:real projective plane}.

\end{remark}
   The assumption of reducedness is essential, as we now show.
\begin{example}[The effect of fattening of the configuration]\label{ex:fat}
   Let $\call$ be a configuration of lines and $\calp$ a configuration of points, and let $k \geq 2$ be an integer.
   Let $k\call$ denote the configuration arising from $\call$ by taking
   all configuration lines with multiplicity $k$. Then it is easy to see that
   $$H_L(\calp,k\call)=k^2\cdot H_L(\calp,\call).$$
\end{example}

   We know of no examples with $H(C)\leq -4$. It is therefore reasonable to ask the following
   question.
\begin{problem}\label{pro:always -4}
   Does the lower bound $-4$ remain valid for $H(\P^2)$? Or, more directly: is
   $H(\P^2)=H_L$?
\end{problem}

   As Corollary \ref{cor:tight bound} shows, the $H_L$--constants of most interest are those
   computed using the full set of singularities of line configurations.
   In this case, if $d$ is the number of lines in a line arrangement $\call$, $s$
   the number of points of intersection of these lines (so $\calp=\calp(\call)$),
   and $m_i$ the number of lines meeting at the $i$th point,
   then (as in (b) of \eqref{eq:comb equalities}) we have
   $\binom{d}{2}=\sum_i \binom{m_i}{2}$, or $d^2-\sum_im_i^2=d-\sum_i m_i$. Thus
\begin{equation}
   H_L(\calp(\call),\call)=\frac{d-\sum_im_i}{s}=\frac{d}{s}-\overline{m},
\end{equation}
   with $\overline{m} = \frac{1}{s}\sum_{i=1}^{s} m_{i}$ being the average of the multiplicities $m_{i}$.
      If we define $m$ to be such that $d(d-1)=sm(m-1)$ (and hence $d^2/s> (m-1)^2$ or $\frac{d}{\sqrt{s}}+1>m$),
   then (using a standard fact; see Lemma \ref{standard fact}) we get an estimate
\begin{equation}
  \frac{d}{s}-m\leq H_L(\calp(\call),\call)
\end{equation}
  depending only on $d$ and $s$
   (which leads to the following less accurate but even simpler estimate
   $H_L(\calp(\call),\call)> d(\frac{1}{s}-\frac{1}{\sqrt{s}})-1>-\frac{d}{\sqrt{s}}-1$).

   Moreover, if the lines are not concurrent, then the proper transform of the lines on the blow up of $\P^2$ at the $s$ points
   are disjoint and all have negative self-intersection. The Index Theorem thus says
   (except in the trivial case of concurrent lines) that there can be at most $s$ lines; i.e., that $d\leq s$, and hence
 \begin{equation}
  -\overline{m}< H_L(\calp(\call),\call)\leq1-\overline{m}.
\end{equation}

\begin{remark}
   The above bounds hold for all ground fields in all characteristics.
   This suggests introducing notation for linear $H$-constants for configurations
   of lines defined over a specific field $\K$ other than just the complex numbers,
   namely, $H_{L,\K}$.
   For example, if one takes $\call$ to be all of the lines defined over a finite field of $q$ elements
so $\calp(\call)$ is the set of all of the points defined over that field, then
$s=d=q^2+q+1$ and $\overline{m}=m=q+1$, so $H_L(\calp(\call),\call)=-q=\frac{d}{s}-m=1-\overline{m}$.
Thus, over an algebraically closed field $\K$, $H_{L,\K}=-\infty$ if $\operatorname{char}(\K)>0$,
so the question of what is the value of $H_{L,\K}$, is of interest only if $\operatorname{char}(\K)=0$.
It is not clear what exactly the least value of $H_L(\calp(\call),\call)$ is for arrangements $\call$
defined over a fixed finite field. However, any
line arrangement $\call'$ and its singular points $\calp(\call')$
defined over a finite field of $q$ elements has $d'\leq q^2+q+1$ lines and $s'\leq q^2+q+1$ points
with multiplicity $m_i'\leq q+1$ at each point, so $s'm'(m'-1)=\sum_im_i'(m_i'-1)\leq s'q(q+1)\leq sq(q+1)$,
hence $H_L(\calp(\call'),\call')\geq \frac{d'}{s'}-m'> -m'\geq -q-1$.
\end{remark}

  \begin{lemma}\label{standard fact}
   Consider any finite set of $s$ positive integers $m_i$. Let $\overline{m}$ be the average and let
   $m$ be defined so that $\sum_im_i(m_i-1)=sm(m-1)$. Then $m\geq \overline{m}$
   with equality if and only if all $m_i$ are equal.
   \end{lemma}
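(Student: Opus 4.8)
The plan is to rewrite the defining relation for $m$ in terms of the averages of the $m_i$ and of their squares, and then to deduce the inequality from the nonnegativity of the variance (equivalently, from convexity of $x\mapsto x(x-1)$). First I would record the elementary identity obtained by expanding the definition: since $\sum_i m_i(m_i-1)=sm(m-1)$, dividing by $s$ gives
$$m(m-1)=\frac1s\sum_i m_i^2-\overline{m}.$$
Writing $\overline{m^2}=\frac1s\sum_i m_i^2$ for the average of the squares, this reads $m(m-1)=\overline{m^2}-\overline{m}$.

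Next I would invoke the standard fact that $\overline{m^2}\geq\overline{m}^2$, with equality exactly when all the $m_i$ coincide; this is the Cauchy--Schwarz inequality, or equivalently the statement that the variance $\frac1s\sum_i(m_i-\overline{m})^2=\overline{m^2}-\overline{m}^2$ is nonnegative. Substituting yields
$$m(m-1)=\overline{m^2}-\overline{m}\geq\overline{m}^2-\overline{m}=\overline{m}(\overline{m}-1).$$

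The final step is a monotonicity argument. Before comparing, I must pin down which root of the quadratic defines $m$: since each $m_i\geq1$ we have $\sum_i m_i(m_i-1)\geq0$, so $m(m-1)\geq0$ forces $m$ to be the root that is $\geq1$ (the other root is $\leq0$ and is not the intended one). As $\overline{m}\geq1$ as well, both $m$ and $\overline{m}$ lie in the range $[1,\infty)$ on which the function $f(x)=x(x-1)$ is strictly increasing. Hence the inequality $f(m)\geq f(\overline{m})$ established above gives $m\geq\overline{m}$. For the equality clause, $m=\overline{m}$ forces $f(m)=f(\overline{m})$, hence $\overline{m^2}=\overline{m}^2$, i.e.\ the variance vanishes and all $m_i$ are equal; conversely, if all $m_i$ equal a common value $c$, then $m=c=\overline{m}$ directly from the definition.

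I do not anticipate a serious obstacle here: the lemma is essentially Jensen's inequality for the convex function $x(x-1)$. The only point requiring a little care is the bookkeeping around the two roots of the defining quadratic, which is why I would make the choice $m\geq1$ explicit at the outset, together with the observation $\overline{m}\geq1$, so that the concluding monotonicity step is applied on the correct domain.
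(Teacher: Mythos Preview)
Your proof is correct and follows essentially the same approach as the paper's: both arguments reduce to the variance inequality $\overline{m^2}\geq\overline{m}^2$ to obtain $m(m-1)\geq\overline{m}(\overline{m}-1)$, and then invert. The only cosmetic difference is that the paper writes $m=(1+\sqrt{1+4c})/2$ explicitly and compares square roots, whereas you phrase the same inversion as monotonicity of $x\mapsto x(x-1)$ on $[1,\infty)$; your added remark pinning down the correct root and noting $\overline{m}\geq1$ makes that step a bit more explicit than the paper does.
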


   \begin{proof} If we let $c=\sum_im_i(m_i-1)/s$, then $m=(1+\sqrt{1+4c})/2$.
   As is well known and easy to prove, $\sum_im_i^2/s\geq \overline{m}^2$, with equality if and only if all $m_i$ are equal.
   Thus $c=\sum_im_i(m_i-1)/s\geq \overline{m}^2-\overline{m}$,
   so $1+4c\geq 4(\overline{m}^2-\overline{m})+1=(2\overline{m}-1)^2$,
   hence $m=(1+\sqrt{1+4c})/2\geq \overline{m}$, with equality if and only if all $m_i$ are equal.
   \end{proof}

   In those cases where the $m_i$ are all equal, we of course have $m_i=m=\overline{m}$,
   but (over the complex numbers) we know of only one such nontrivial configuration with $m>2$,
   and hence $m=3$ by the Hirzebruch inequality \eqnref{eq:Hirzebruch}.
   This configuration is the dual of the Hesse configuration,
   for which $d=9$, $s=12$ (under duality, the 9 lines are the flex points of a smooth plane cubic,
   and the 12 points are the lines through pairs of flex points).
   A generalization of this is given in the following example.

\begin{example}\label{s-elliptic}
This example is taken from \cite[p.~120]{Hir83} (see also \cite{S1868}).
Let $D$ be a smooth plane cubic curve. Pick a flex to define the group law on the points of $D$.
Let $\call$ be the lines dual to the points of a finite group $U$ of order $k$ and let $w$ be the number of
lines in $\call$ whose duals are flex points. Let $C$ be the union of the $k$ lines.
Then $t_2=k-w$ and $t_3=\frac{k(k-3)}{6}+\frac{w}{3}$, and so we get
$H(C) = -3+\frac{12k-6w}{k^2+3k-4w}$, which approaches $-3$ from above as $k\to\infty$.
\end{example}

   Since in the complex projective plane the dual Hesse configuration
   is the unique nontrivial configuration of lines with only triple intersection points that we know of,
   it is a natural problem of independent interest to wonder if this is in fact the only such configuration.
   The following problem might
   be viewed as a first step towards understanding configurations
   having only triple points. Apparently, questions revolving around the same idea,
   have been present in combinatorics and discrete geometry for some time, see \S 1.1 in \cite{SolSto13}.
\begin{problem}
   Let $\call=\left\{L_1,\ldots,L_s\right\}$ be a configuration of lines
   with only triple intersection points. Can $\call$ be equipped with
   a group structure?
\end{problem}

   Of course, the question above has an affirmative answer for the dual Hesse configuration.
   A possible way to introduce a group
   structure on a configuration $\call$ with only triple intersection points
   would be as follows. We fix one configuration line, say $L_1$ and declare it
   as the neutral element. If a line $L$ intersects $L_1$ in some point $P$,
   then the third line passing through $P$ will be $-L$. This explains the
   addition for lines $L,M$ in the same pencil as $L_1$. For the general case,
   assume that the intersection point $P=L \cap M$ does not belong to $L_1$.
   Let $N$ be the third line passing through $P$. Then we set $L+M=-N$.
   We were not able to verify if this construction does indeed lead to a group
   structure on $\call$.

\subsection{The real projective plane}\label{subsec:real projective plane}
   Families with limiting $H$-constant of $-3$, such as that in Example \ref{s-elliptic},
   can also be given over the reals, where the limiting value of $-3$ has special relevance.
   To explain this in greater detail, we now consider arrangements $\call$ of lines defined over the reals.
   The results of this section might be of independent interest in combinatorics.
   Let $C$ be the union of the lines in $\call$. If the lines are concurrent, then for any point set $\calp$
   it is easy to check that $H(C;\calp)>-1$. So suppose the lines are not concurrent.
   Then $d\geq3$ and there is Melchior's Inequality \cite{M41}
   \begin{equation}\label{eq:Melchior}
      t_2 \geq 3 + \sum_{k>3} (k-3)t_k,
   \end{equation}
   which is stronger than the Hirzebruch inequality above but valid only over $\R$ (note that for example it solves
   the Sylvester-Gallai problem: any arrangement of \emph{real} nonconcurrent lines must have
   a point where exactly two lines meet; see \cite{BorMos90} for an excellent introduction
   to this circle of ideas).
   Thus $t_2 = e + 3 + \sum_{k>3} (k-3)t_k$ for some $e \geq 0$, so
\begin{equation}
\begin{split}
H(C) &= \frac{d-\sum_{k>1} k t_k}{\sum_{k>1} t_k}> \frac{-\sum_{k>1} kt_k}{\sum_{k>1} t_k}
         = \frac{-(2e+6+2\sum_{k\geq 3}(k-3)t_k) - \sum_{k\geq 3}kt_k}{\sum_{k>1} t_k}\\
        &= \frac{-(2e+6+3\sum_{k\geq 3}(k-2)t_k)}{\sum_{k>1} t_k}= \frac{-(2e+6+3\sum_{k\geq 3}(k-2)t_k)}{e+3+\sum_{k\geq 3}(k-2)t_k}\\
        &= -3 + \frac{e+3}{e+3+\sum_{k\geq 3}(k-2)t_k} > -3.
\end{split}
\end{equation}
   Thus we have proved a real analogue of Theorem \ref{thm:main}. Examples \ref{k-gons} and \ref{Boroczky}
   thus give the following result.
\begin{theorem}[Bounded linear negativity for $\P^2(\R)$]
   We have $H_{L,\R}(\P^2(\R))=-3$.
\end{theorem}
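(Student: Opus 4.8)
The plan is to prove the two bounds $H_{L,\R}(\P^2(\R)) \geq -3$ and $H_{L,\R}(\P^2(\R)) \leq -3$ and then combine them. The first bound is already in hand: the displayed computation immediately preceding the statement shows, via Melchior's inequality \eqref{eq:Melchior}, that every union $C$ of nonconcurrent real lines satisfies $H(C) > -3$, while concurrent arrangements give $H(C;\calp) > -1$. Since $H_{L,\R}$ is the infimum of $H(C)$ taken over unions $C$ of real lines (the real analogue of Remark \ref{LinearCaseFullSingSet}), and an infimum of numbers each exceeding $-3$ is at least $-3$, we obtain $H_{L,\R} \geq -3$ with no further work.

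The real content is the reverse bound $H_{L,\R} \leq -3$, for which I would produce a sequence of real line arrangements with $H(C) \to -3$. Here it helps to record the exact shape of the constant. Adding $3$ to the expression $H(C) = \frac{d - \sum_{k\geq 2} k\, t_k}{s}$ (where $s=\sum_{k\geq 2} t_k$ is the number of singular points) and using Melchior's inequality in the form $t_2 - \sum_{k\geq 4}(k-3)t_k = e+3$ with $e\geq 0$, one finds
$$H(C) = -3 + \frac{d+e+3}{s}.$$
Thus $H(C)$ is close to $-3$ precisely when the number of singular points $s$ is large relative to $d+e$. The strategy is therefore to exhibit real arrangements in which $s=\Theta(d^2)$ (so that almost every one of the $\binom{d}{2}$ line pairs is absorbed into a point of multiplicity at least $3$) while the Melchior excess $e$ stays bounded; any such family forces $(d+e+3)/s \to 0$ and hence $H(C)\to -3$ from above.

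These families are supplied by Examples \ref{k-gons} and \ref{Boroczky}, together with the real form of Example \ref{s-elliptic} noted above. The B\"or\"oczky arrangements are the decisive case: they meet Melchior's inequality with equality ($e=0$, i.e.\ only three ordinary points) while carrying a quadratically growing number of triple points, so that $s=\Theta(d^2)$. For each family I would read off the data $t_2,t_3,\dots$ directly from the geometry of the configuration and substitute into the displayed formula to confirm the limit $-3$.

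The main obstacle is the construction and incidence-counting of these examples over $\R$: one must exhibit genuinely \emph{real} arrangements whose triple points grow like $d^2$ with only a bounded number of ordinary points, and then verify the multiplicity data $t_k$. This is exactly the phenomenon that separates the real case from the complex one, since over $\R$ such point-rich, ordinary-point-poor arrangements exist and pin the bound to $-3$, whereas the Hirzebruch inequality yields only $-4$ over $\C$. Once these examples are in place, the theorem follows at once by combining $H_{L,\R}\geq -3$ with $H(C)\to -3$.
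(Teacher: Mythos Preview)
Your approach is exactly the paper's: the lower bound $H_{L,\R}\geq -3$ is the Melchior computation displayed just before the statement, and the upper bound comes from the explicit families in Examples~\ref{k-gons} and~\ref{Boroczky} whose $H$-constants tend to $-3$. One small factual slip: the B\"or\"oczky arrangements do \emph{not} have $e=0$ (they have $t_2=k-3$ with no higher multiplicities beyond triple points, so $e=k-6$, and in particular they do not have ``only three ordinary points''); it is rather the $k$-gon arrangements of Example~\ref{k-gons} that achieve equality in Melchior, since there $t_2=k$ and $t_k=1$ give $e=0$. This does not affect your argument, since in your own formula $H(C)=-3+(d+e+3)/s$ one only needs $d+e=o(s)$, and both families satisfy this with $s=\Theta(d^2)$ and $e=O(d)$.
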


\begin{example}\label{k-gons}
Let $\call$ be the configuration of $2k$ lines where $k$ of the lines are the sides of a regular $k$-gon
and the other $k$ lines are the lines of bilateral symmetry of the $k$-gon (i.e., angle bisectors and
perpendicular bisectors of the sides). Then it is not hard to check that
$t_k=1$ (this is the center of the $k$-gon), $t_2=k$ (these are the midpoints of the sides)
$t_3 = \binom{k}{2}$ (these are the intersections of pairs of sides with the line of symmetry between the sides of the pair)
and $t_r=0$ for $r>3$.
Thus for the union $C$ of the $2k$ lines we get $H(C)=-3+\frac{4k+6}{k^2+k+2}$. These again approach $-3$ as
$k\to\infty$.
\end{example}

\begin{example}\label{Boroczky}
Less elementary real examples which also approach $-3$ as $k\to\infty$ are given in
\cite[Property 4 of Section 6]{FP84}. The point of the examples was to show sharpness of bounds
concerning how many triple points a real line arrangement can have.
We simply note that for each $k$ divisible by 6, a line arrangement is given over the reals with
$k$ lines, $t_2= k-3$, $t_3=1+k(k-3)/6$ and $t_i=0$ for $i>3$. Thus the corresponding
$H$-constants are $-3+\frac{12k+54}{k^2+3k-12}$.
\end{example}
%   \begin{remark}\label{realbound}
%   The preceding two examples are interesting because they are in some sense optimal for
%   configurations over the reals.
%   \end{remark}
%

   In the next section, we discuss some other interesting
   configurations coming from unitary reflection groups and compute their linear $H$--constants.

\section{Arrangements of lines with low linear $H$--constants}\label{sec: low H-constants}
   As an alternative to asking for configurations with only triple points, one can ask for
   configurations with no double points. We know of only three kinds of line arrangements (over the complex numbers) for which
   there are no double points.

   The first one generalizes the dual of the Hesse configuration.
   Recall that the original Hesse configuration
   consists of $12$ lines passing through the flexes of a smooth plane cubic.
   The $9$ lines of its dual can be taken to be
   the linear factors of $(y^3-z^3)(x^3-z^3)(x^3-y^3)$. The generalization consists of the
   lines $\call_n$ given by the factors of $(y^n-z^n)(x^n-z^n)(x^n-y^n)$ for $n\geq3$. Urz\'ua calls the resulting
   configurations \emph{Fermat arrangements} \cite[Example II.6]{Urz08}.
   The corresponding points are the $n^2$ points
   of intersection of $x^n-z^n=0$ and $y^n-z^n=0$, together with the three coordinate vertices.
   The three coordinate vertices occur with multiplicity $n$; the other $n^2$ points
   are triple points. For these we have $H_L(\calp(\call_n),\call_n)=\frac{3n-3n-3n^2}{n^2+3}>-3$
   with $\lim_{n\to\infty} H_L(\calp(\call_n),\call_n)=-3$.

   There are only two other arrangements $\call$ with no double points that we know of,
   one due to Klein \cite{Kle79} with 21 lines (for which $H_L(\calp(\call),\call)=-3$) and another due to Wiman \cite{Wim96}
   with 45 lines (for which $H_L(\calp(\call),\call)=\frac{-225}{67}\approx -3.36$). These
   and certain subconfigurations of the Wiman configuration (see \S \ref{subsection: subconfigurations}) are  the only
   line arrangements we know of with $H_L(\calp(\call),\call)\leq -3$.

   The examples above, i.e., the Fermat, Klein and Wiman arrangements,
also are interesting for another reason.  Let $I(P) \subset \C [x,y,z]$ be the homogeneous
ideal of a point $P\in \P^2$.  Then the homogeneous ideal of a finite set of points
$\{P_1,\ldots,P_s\}\subset \P^2$ is $I = \bigcap_i I(P_i)$, and the $m$th symbolic power
of $I$ can be defined to be $I^{(m)} = \bigcap_i I(P_i)^m$.
It is typically quite rare to have a failure of containment $I^{(3)}\not\subseteq I^2$. But if $I$ is the ideal of the points of intersection
of the lines for any of these three cases, we have $I^{(3)}\not\subseteq I^2$ (see \cite{DST13, HS14}
   for the Fermat arrangements). This is also true for at least some of the configurations given in Example \ref{Boroczky} \cite{CGetal13}, so
   we suspect that the same may hold for the configurations given in Example \ref{s-elliptic},
   but we are not dwelling on this problem here.

   We hope to come back to the ideal theoretic properties related to
   configurations of lines in a separate paper in the near future.

   Now we address the two exotic configurations in more detail.

\subsection{The Klein configuration of 21 lines}\label{subsection: Klein}
The Klein configuration is a projective configuration of $21$ lines whose intersections consist of
precisely $21$ quadruple points and $28$ triple points, defined over $\mathbb{R}[\sqrt{-7}]$
(see the top entry of the table on p.~120 of \cite{Hir83}).
In this case $H_L(\calp(\call),\call)=\frac{d-\sum_im_i}{s}=\frac{21-168}{49}= -3$.
We also note that each of the 21 lines contains eight of the 49 points, four of each type.
(To see this, note that the lines are the fixed lines of the 21 involutions of the group $PSL(2,7)$ acting on $\P^2$.
There are 21 elements of order 2, which by the character table of the group
(see \url{http://brauer.maths.qmul.ac.uk/Atlas/v3/})
form a single orbit and hence the group acts transitively on the 21 lines. Thus the number of 4-points on each line is the same
as is the number of 3-points. So if $a$ is the number of 3-points per line and $b$ the number of 4-points per line,
then counting the number of pairs $(p,L)$ where $L$ is one of the 21 lines and $p$ is a 4-point on $L$, we get $21b$
since there are $b$ points on each of 21 lines, but we also get $21\cdot4$, since each point of the 21 4-points $p$ is on 4 lines. Thus $b=4$.
Similarly, $21a=28\cdot3$ so $a=4$ also. We thank the referee for the foregoing argument.)
Explicit equations for the 21 lines over the complex numbers are given in \cite{GR90}.

\subsection{The Wiman configuration of 45 lines}\label{subsection: Wiman}
The Wiman configuration \cite{Wim96} is a projective configuration of $45$ lines whose 201 intersections consist of
precisely 36 quintuple points, 45 quadruple points, and 120 triple points (see the bottom line of the table
on p.~120 of \cite{Hir83}).
In this case $H_L(\calp(\call),\call)=\frac{d-\sum_im_i}{s}=\frac{45-720}{201}=\frac{-225}{67}\approx -3.36$.
This is the most negative example we know.

   Arguing as for the Klein configuration shows that the points are equidistributed
   in this case as well. There are $4$ quintuple, $4$ quadruple and $8$ triple
   points on each of the configuration lines.

\subsection{Subconfigurations of special configurations}\label{subsection: subconfigurations}

   We close the paper by noting that the Wiman and Klein configurations also have subconfigurations $\call'\subset \call$ with highly negative constants $H_L(\calp(\call'),\call')$ by the next result.

\begin{proposition}\label{prop:subconfiguration}
Let $\call$ be a configuration of $d$ lines and let $\calp = \calp(\call)$ be the set of singularities of $\call$.  Suppose each line in $\call$ contains the same number $n$ of points of $\calp$.  Let $\call'\subset \call$ be a subconfiguration of $d'$ lines.  Then $$H_L(\calp,\call') = H_L(\calp,\call) + \frac{(d-d')(n-1)}{s}.$$
\end{proposition}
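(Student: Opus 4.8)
The plan is to reduce both $H_L(\calp,\call)$ and $H_L(\calp,\call')$ to the simplified ``linear'' form already derived in the excerpt, and then to exploit the equidistribution hypothesis through a double count. Throughout, write $m_i=\mmult_{P_i}(\call)$ and $m_i'=\mmult_{P_i}(\call')$ for $i=1,\dots,s$, where $\calp=\left\{P_1,\dots,P_s\right\}$.

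First I would establish the simplified form for the subconfiguration. The key observation is that any two lines of $\call'$ are in particular two lines of $\call$, so they meet at a singular point of $\call$, i.e.\ at a point of $\calp(\call)=\calp$; hence $\calp(\call')\subseteq\calp$. Consequently each pair of lines of $\call'$ is accounted for at exactly one point $P_i$, and points with $m_i'\in\{0,1\}$ simply contribute nothing, so the identity $\binom{d'}{2}=\sum_i\binom{m_i'}{2}$ holds relative to this fixed point set exactly as for the full configuration. Clearing binomials gives $(d')^2-\sum_i(m_i')^2=d'-\sum_i m_i'$, so
$$H_L(\calp,\call')=\frac{(d')^2-\sum_i(m_i')^2}{s}=\frac{d'-\sum_i m_i'}{s},$$
and likewise $H_L(\calp,\call)=\frac{d-\sum_i m_i}{s}$.

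The second step is the double count that uses the hypothesis. Counting the incidences $(P_i,L)$ with $P_i\in\calp$, $L\in\call$ and $P_i\in L$ by first summing over points yields $\sum_i m_i$, while summing over lines yields $\sum_{L\in\call}\#\{i:P_i\in L\}=dn$, since each line of $\call$ meets $\calp$ in exactly $n$ points by hypothesis; thus $\sum_i m_i=dn$. Running the identical count over $\call'$, and using that every line of $\call'$ is a line of $\call$ and therefore still contains exactly $n$ points of $\calp$, gives $\sum_i m_i'=d'n$.

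Finally I would subtract the two simplified expressions:
$$H_L(\calp,\call')-H_L(\calp,\call)=\frac{(d'-d)-(\sum_i m_i'-\sum_i m_i)}{s}=\frac{(d'-d)-(d'-d)n}{s}=\frac{(d-d')(n-1)}{s},$$
which is the claim. The only point requiring care---hence the main obstacle, though it is a mild one---is the first step: one must notice that $\binom{d'}{2}=\sum_i\binom{m_i'}{2}$ is taken relative to the \emph{fixed} set $\calp=\calp(\call)$ rather than relative to $\calp(\call')$, which is legitimate precisely because $\calp(\call')\subseteq\calp$ and the extra points contribute nothing. Everything else is a direct incidence count resting on the equidistribution hypothesis.
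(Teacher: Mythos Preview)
Your proof is correct and follows essentially the same route as the paper: reduce both constants to the linear form $\frac{d-\sum_i m_i}{s}$ via the identity $\binom{d'}{2}=\sum_i\binom{m_i'}{2}$ (valid because $\calp(\call')\subseteq\calp$), then use the equidistribution hypothesis to relate $\sum_i m_i$ and $\sum_i m_i'$. The only cosmetic difference is that you compute $\sum_i m_i=dn$ and $\sum_i m_i'=d'n$ separately, whereas the paper records their difference $(d-d')n=\sum_i m_i-\sum_i m_i'$ in one line.
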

\begin{proof}
Say $\calp = \{Q_1,\ldots, Q_s\}$ and put $m_i = \mmult_{Q_i}(\call)$ and $m_i' = \mmult_{Q_i}(\call')$.  Observe that $d'(d'-1) = \sum_i m_i'(m_i'-1)$ since $\calp$ contains the singularities of $\call'$.  Also, since every line in $\call$ contains $n$ points of $\calp$, we have $(d-d')n+ \sum_i m_i' = \sum_i m_i$.  We conclude $$H_L(\calp,\call') = \frac{d' - \sum_i m_i'}{s} = \frac{d' + (d-d')n- \sum_i m_i}{s} =H_L(\calp,\call) + \frac{(d-d')(n-1)}{s},$$ as claimed.
\end{proof}

\begin{remark}
With the hypotheses of the proposition, suppose $H_L(\calp(\call),\call')\leq -1$.  Then we have an inequality $$H_L(\calp(\call'),\call') \leq H_L(\calp(\call),\call') = H_L(\calp(\call),\call)+\frac{(d-d')(n-1)}{s}$$ as in Remark \ref{LinearCaseFullSingSet}.  However, for ``large'' subcollections $\call'\subset \call$ we often have an equality $\calp(\call') = \calp(\call)$.
\end{remark}

\begin{example}
Here are some explicit applications of Proposition \ref{prop:subconfiguration} to the Wiman configuration $\call$.  Let $\call'\subset \call$ be a subcollection and let $\call'' = \call - \call'$ be its complement.

If $\call''$ is a single line, then $H_L(\calp(\call'),\call') = -\frac{220}{67} \approx -3.28.$

Next suppose $\call''$ is a pair of lines.  Then $H_L(\calp(\call),\call') = -\frac{215}{67} \approx -3.21$ regardless of what pair of lines $\call''$ is.  If the lines in $\call''$ meet at a point of multiplicity at least $4$ in $\call$, then $\calp(\call') = \calp(\call)$ and $H_L(\calp(\call'),\call') = -\frac{215}{67}$.  On the other hand, if they meet at a point $Q$ of multiplicity $3$ in $\call$ then $\call'$ has only $200$ singularities, $\calp(\call) = \calp(\call') \cup \{Q\}$, and $\mmult_Q(\call') = 1$.
Starting from the fact that there are $4$ quintuple, $4$ quadruple and $8$ triple points on each of the lines of the Wiman configuration, we find that
$\call'$ has 14 double points, 113 triple points, 45 quadruple points and 28 quintuple points, so $H_L(\calp(\call'),\call') = -\frac{161}{50} = -3.22$.

Similar computations of the constants $H_L(\calp(\call'),\call')$ can be performed as the size of $\call''$ grows, but the combinatorics of $\call$ obviously will play an important role in determining all the constants obtainable in this way.
\end{example}

%*****************************************************************************
\paragraph{Acknowledgments}
Th.~Bauer was partially supported by DFG grant BA~1559/6--1,
B. Harbourne was partially supported by NSA grant H98230-13-1-0213,
J.~Huizenga was partially supported by a National Science Foundation Mathematical Sciences Postdoctoral Research Fellowship,
A.~Lundman was supported by the V.R. grant NT:2010-5563,
T.~Szemberg was partially supported by NCN grant UMO-2011/01/B\-/ST1/04875.
The authors thank the referee for his very helpful comments.
We thank also the Mathematisches Forschungsinstitut Oberwolfach for hosting a workshop in February 2014
where the work presented in this paper was started.

%*****************************************************************************

%***************************************************************************** % Addresses

\bigskip
   Tho\-mas Bau\-er,
   Fach\-be\-reich Ma\-the\-ma\-tik und In\-for\-ma\-tik,
   Philipps-Uni\-ver\-si\-t\"at Mar\-burg,
   Hans-Meer\-wein-Stra{\ss}e,
   D-35032~Mar\-burg, Germany.

\nopagebreak
   \textit{E-mail address:} \texttt{tbauer@mathematik.uni-marburg.de}

\bigskip
   Sandra Di Rocco,
   Department of Mathematics, KTH, 100 44 Stockholm, Sweden.

\nopagebreak
   \textit{E-mail address:} \texttt{dirocco@math.kth.se}

\bigskip
   Brian Harbourne,
   Department of Mathematics, University of Nebraska-Lincoln, Lincoln, NE, 68588, USA

\nopagebreak
   \textit{E-mail address:} \texttt{bharbourne1@unl.edu}

\bigskip
   Jack Huizenga,
   Department of Mathematics, Statistics, and Computer Science, University of Illinois at Chicago, Chicago, IL 60607, USA

\nopagebreak
   \textit{E-mail address:} \texttt{huizenga@uic.edu}

\bigskip
   Anders Lundman,
   Department of Mathematics, KTH, 100 44 Stockholm, Sweden.

\nopagebreak
   \textit{E-mail address:} \texttt{alundman@kth.se}

\bigskip
   Piotr Pokora,
   Instytut Matematyki UP,
   Podchor\c a\.zych 2,
   PL-30-084 Krak\'ow, Poland.

\nopagebreak
   \textit{E-mail address:} \texttt{piotrpkr@gmail.com} \\ \nopagebreak
Current address:
   Albert-Ludwigs-Universit\"at Freiburg,
   Mathematisches Institut, D-79104 Freiburg, Germany.

\bigskip
   Tomasz Szemberg,
   Instytut Matematyki UP,
   Podchor\c a\.zych 2,
   PL-30-084 Krak\'ow, Poland.

\nopagebreak
   \textit{E-mail address:} \texttt{tomasz.szemberg@gmail.com} \\ \nopagebreak
Current address:
   Albert-Ludwigs-Universit\"at Freiburg,
   Mathematisches Institut, D-79104 Freiburg, Germany.

%*****************************************************************************

\end{document}